\documentclass[secthm,amsthm]{amsart}
\usepackage{comment}

\usepackage[english]{babel}
\usepackage{amsmath, amssymb}
\usepackage{hyperref}



\newcommand{\iaoi}{if and only if }


\newcommand{\mrm}{\mathrm}
\newcommand{\mcl}{\mathcal}


\newcommand{\R}{\text{$\mathbb{R}$}}
\newcommand{\N}{\text{$\mathbb{N}$}}


\newcommand{\FF}{\text{$\mathcal{F}$}}


\newcommand{\eps}{\text{$\varepsilon$}}
\newcommand{\ph}{\text{$\varphi$}}


\DeclareMathOperator*{\flim}{\mathcal{F}-lim}
\DeclareMathOperator*{\dflim}{(2\mathcal{F})-lim}
\DeclareMathOperator*{\limstat}{\operatorname{limstat}}


\newcommand{\abs}[1]{\left\lvert#1\right\rvert}

\newcommand{\llip}[1]{\left\lfloor #1 \right\rfloor}

\newcommand{\intrv}[2]{[#1,#2]}


\newcounter{low}
\renewcommand{\thelow}{\rm (\alph{low})}
\newenvironment{lista}{\begin{list}{\thelow}{\usecounter{low}\setlength{\leftmargin}{3pc}\setlength{\itemsep}{0pc}}\setlength{\labelwidth}{10pc}}{\end{list}}


\newcommand{\refeq}[1]{{\rm (\ref{#1})}}
\numberwithin{equation}{section}


\newcommand{\inv}[1]{\text{${#1}^{-1}$}}
\newcommand{\dd}{\mrm{d}}
\newcommand{\limti}[1]{\lim\limits_{#1\to\infty}}


\newcommand{\ud}{\text{$\overline{d}$}}

\newcommand{\ld}{\text{$\underline{d}$}}


\theoremstyle{definition}
\newtheorem{definition}{Definition}[section]
\newtheorem{example}[definition]{Example}

\theoremstyle{plain}
\newtheorem{theorem}[definition]{Theorem}
\newtheorem{lemma}[definition]{Lemma}
\newtheorem{proposition}[definition]{Proposition}

\theoremstyle{remark}


\begin{document}

\title{L\'evy group and density measures}
\author{Martin Sleziak}
\address{Department of Algebra, Geometry and Mathematical Education, Faculty of Mathematics, Physics and Informatics, Comenius University, Mlynsk\'a dolina, 842 48 Bratislava, Slovakia}
\email{\tt sleziak@fmph.uniba.sk, ziman@fmph.uniba.sk}
\thanks{The first author was supported by VEGA Grant 1/3020/06 and by
Comenius University Grant UK/398/2007}
\author{Milo\v s Ziman}
\thanks{The second author was supported by
Comenius University Grant UK/336/2005} \keywords{asymptotic
density, density measure, finitely additive measure, L\'evy group,
statistical convergence} \subjclass[2000]{Primary: 11B05, 28A12;
Secondary: 20B27, 28D05}

\begin{abstract}
We will deal with finitely additive measures on integers extending
the asymptotic density. We will study their relation to the L\'evy
group $\mathcal{G}$ of permutations of $\mathbb N$. Using a new
characterization of the L\'evy group $\mathcal G$ we will prove
that a finitely additive measure extends density if and only if it
is $\mathcal{G}$-invariant.
\end{abstract}

\maketitle

\section*{Introduction}

L\'evy group $\mcl G$ is a group of permutations of positive
integers which is tightly linked to the notion of asymptotic
density. The connection between this group and asymptotic density
(as well as several related notions) was studied e.g.~by
Bl\"umlinger \cite{blumlinger}, Obata \cite{obata, blumoba}. Some
other groups related to (extensions of) asymptotic density were
also studied, we can mention recent papers of Nathanson and Parikh
\cite{nathpari} or Giuliano Antonini and Pa\v{s}t\'eka
\cite{GIULPAST}.

In this paper we will study the connection between the L{\'e}vy
group and finitely additive measures on integers extending the
asymptotic density. We will call such measures density measures.
The term density measures was probably coined by Dorothy Maharam
\cite{maharam}. They were studied (among many others) by Blass,
Frankiewicz, Plebanek and Ryll--Nar\-dzew\-ski in \cite{bfpr}, van
Douwen in \cite{vandouwen} or \v Sal\'at and Tijdeman in
\cite{st}.

Both the L\'evy group and the density measures have found
applications in number theory and, more recently, in the theory of
social choice (see e.g.~Fey \cite{fey}, Lauwers \cite{lauwers}).

The main purpose of this paper is to show that the density
measures are precisely the finitely additive measures which are
$\mcl G$-invariant. The $\mcl G$-invariant measures were studied
by Bl\"umlinger in \cite{blumlinger}. Bl\"umlinger and Obata deal
with the $\mcl G$-invariant extensions of Ces\'aro mean in
\cite{blumoba}.

We also obtain an interesting characterization of the L\'evy group
in terms of statistical convergence.

\section{Preliminaries}

We start by defining the two central notions of this paper -- the
L\'evy group and the density measures -- and mentioning a few
necessary facts about them.

\begin{definition}
The \emph{asymptotic density} of a set $A\subseteq\N$ is defined
by $d(A) = \lim\limits_{n \to \infty} \frac{A(n)}{n}$, where $A(n)
= \big|A \cap [1,n]\big|$.
We denote the collection of sets having asymptotic density by $\mcl{D}$.

A \emph{density measure} is a finitely additive measure on $\N$
which extends the asymptotic density; i.e., it is a function
$\mu:\, \mcl{P}(\N) \to \intrv 01$ satisfying the following
conditions:
\begin{lista}
\item\label{dm0} $\mu(\N) = 1$;
\item\label{dm1} $\mu(A \cup B) =
\mu(A) + \mu(B)$ for all disjoint $A, B \subseteq \N$;
\item\label{dm2} $\mu|_{\mcl{D}} = d$.
\end{lista}
\end{definition}
(Throughout the paper a \emph{measure} will mean a set function on
$\mcl P(\N)$ fulfilling \ref{dm0} and \ref{dm1}.)

Density measures can be constructed using a limit along an
ultrafilter. The set of all free ultrafilters on $\N$ will be
denoted by $\beta\N^*$. For $\FF\in\beta\N^*$ and a bounded sequence
$(x_n)$ we denote by $\flim x_n$ the limit of this sequence along
the ultrafilter $\FF$ (see \cite[p.122, Definition 8.23]{balste},
\cite[p.206, Definition 2.7]{hrjech} for definition and basic
properties of a limit along an ultrafilter).

For any $\FF\in\beta\N^*$ the function
$$\mu_{\FF}(A)=\flim \frac{A(n)}n$$
is a density measure (see e.g.~\cite[Theorem 8.33]{balste},
\cite[p.207]{hrjech}). We will use this construction of a density
measure several times. Another possibility to show the
existence of density measures relies on Hahn-Banach theorem.

\begin{definition}
The \emph{L\'evy group} $\mcl G$ is the group of all permutations
$\pi$ of $\N$ satisfying
\begin{equation}\label{levy}
    \lim_{n \to \infty}\frac{\big|\{ k;\, k \leq n < \pi(k)\}\big|}{n} = 0.
\end{equation}
\end{definition}

We will need the following characterization of the L\'evy group.
\begin{lemma}{\cite[Lemma 2]{blumlinger}} \label{lmBlum}
A permutation $\pi$ of $\mathbb N$ belongs to
$\mcl{G}$ \iaoi
\begin{gather}\label{blum eq}
    \lim_{n\to\infty} \frac{A(n)-(\pi A)(n)}{n} = 0
\end{gather}
for each $A\subseteq \N$.
\end{lemma}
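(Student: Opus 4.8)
The plan is to prove the two directions separately. For the forward direction, suppose $\pi \in \mcl G$, so that $a_n := \frac1n \big|\{k \le n < \pi(k)\}\big| \to 0$. Fix $A \subseteq \N$ and write $A(n) - (\pi A)(n) = \big|A \cap [1,n]\big| - \big|\pi(A) \cap [1,n]\big|$. The quantity $\big|\pi(A) \cap [1,n]\big|$ counts the elements $k \in A$ with $\pi(k) \le n$. The key observation is the symmetric difference identity: the set $\{k \in A : k \le n\}$ and the set $\{k \in A : \pi(k) \le n\}$ differ only by elements $k$ with $k \le n < \pi(k)$ (these lie in the first but not the second) or $\pi(k) \le n < k$ (these lie in the second but not the first). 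Hence
\begin{equation*}
\abs{A(n) - (\pi A)(n)} \le \big|\{k;\, k \le n < \pi(k)\}\big| + \big|\{k;\, \pi(k) \le n < k\}\big|.
\end{equation*}
So it remains to observe that the second term, divided by $n$, also tends to $0$: indeed, since $\pi$ is a bijection of $[1,n] \cup \{k : k \le n < \pi(k)\}$ onto $[1,n] \cup \{k : \pi(k) \le n < k\}$ (or by a direct counting argument comparing $\big|\pi([1,n])\big| = n$ with $\big|\pi([1,n]) \cap [1,n]\big|$), the two sets $\{k \le n < \pi(k)\}$ and $\{\pi(k) \le n < k\}$ have the same cardinality. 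This gives \refeq{blum eq} for every $A$.

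For the converse, assume \refeq{blum eq} holds for every $A \subseteq \N$. I would like to recover the defining condition \refeq{levy}, so the natural move is to choose $A$ cleverly. Taking $A = \N$ gives nothing, so instead consider, for each fixed $n$, the set $A = \pi^{-1}\big([1,n]\big) = \{k : \pi(k) \le n\}$; but this is a single set per $n$, whereas \refeq{blum eq} is a statement about a fixed $A$ with $n \to \infty$, so a more careful argument is needed. The cleaner approach is to take $A = \{k : \pi(k) > k\}$ or similar "one-sided" sets. Actually, the slickest route: let $A_+ = \{k \in \N : \pi(k) > k\}$. Then for this set, the count $A_+(n) - (\pi A_+)(n)$ can be analyzed; elements contributing to $k \le n < \pi(k)$ all lie in $A_+$, so one extracts the needed term. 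Alternatively — and this is probably the intended argument — apply \refeq{blum eq} to $A := \pi(\N_{>m}) = \{ \pi(k) : k > m \}$ for suitable $m$, or simply note that from \refeq{blum eq} for $A$ and for $\pi^{-1}(A)$ one gets telescoping control. I expect the hard part to be packaging this converse cleanly: one must produce, from the family of conditions \refeq{blum eq}, the single uniform-looking limit \refeq{levy}, and the natural candidate sets depend on $n$, which does not immediately fit. The resolution is to choose $A$ depending on a parameter and then pass to a diagonal/subsequence argument, or — better — to observe that $\{k \le n < \pi(k)\} \subseteq \pi^{-1}([1,\infty)) \setminus \dots$ and bound it by $A(n) - (\pi A)(n)$ for the fixed set $A = \{k : \pi(k) > k\}$, since for $k \in A$ with $k \le n$ either $\pi(k) \le n$ (contributing to $(\pi A)(n)$ only if also the preimage condition matches) — this requires a short but careful bookkeeping lemma.

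So the skeleton is: (1) establish the two-sided inequality bounding $\abs{A(n)-(\pi A)(n)}$ by the two "crossing" counts; (2) show the two crossing counts are equal (bijection/counting); (3) deduce the forward implication immediately; (4) for the converse, apply the hypothesis to the single set $A = \{k \in \N : \pi(k) > k\}$ and check that $A(n) - (\pi A)(n)$ equals, up to a bounded error or exactly, $\big|\{k;\, k \le n < \pi(k)\}\big| - \big|\{k \in A;\, \pi(k) \le n,\ \pi^{-1}(\text{something})\}\big|$, massage until only the desired nonnegative term survives, and conclude. The main obstacle, as noted, is step (4): making the choice of $A$ work so that the left-hand side of \refeq{blum eq} dominates $\frac1n\big|\{k \le n < \pi(k)\}\big|$ from above (so that its vanishing forces \refeq{levy}).
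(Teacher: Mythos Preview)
The paper does not prove this lemma; it is quoted from Bl\"umlinger \cite{blumlinger} and used as a black box. So there is no ``paper's proof'' to compare against, and your proposal must be judged on its own.

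Your forward direction is correct and cleanly argued: the bound
\[
\abs{A(n)-(\pi A)(n)} \le \big|\{k;\, k\le n<\pi(k)\}\big| + \big|\{k;\, \pi(k)\le n<k\}\big|
\]
together with the observation that the two crossing sets have equal cardinality (since $\pi$ restricts to a bijection between $\{k\le n:\pi(k)\le n\}$ and itself in the obvious sense) gives the result immediately.

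For the converse you are much too tentative. Your first instinct, $A=\{k\in\N:\pi(k)>k\}$, works \emph{exactly}, not merely up to a bounded error, and no ``careful bookkeeping lemma'' or diagonal argument is needed. Indeed, for this $A$ one has
\[
A(n)=\big|\{k:k\le n,\ \pi(k)>k\}\big|,\qquad (\pi A)(n)=\big|\{k:k<\pi(k)\le n\}\big|,
\]
and the set $\{k:k\le n,\ \pi(k)>k\}$ splits disjointly into $\{k:k<\pi(k)\le n\}$ and $\{k:k\le n<\pi(k)\}$. Subtracting,
\[
A(n)-(\pi A)(n)=\big|\{k;\, k\le n<\pi(k)\}\big|
\]
on the nose. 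Hypothesis \refeq{blum eq} applied to this single set $A$ therefore gives \refeq{levy} directly. The ``main obstacle'' you flag in step~(4) does not exist; you should simply carry out this two-line computation instead of listing alternative strategies.
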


For more information about the L\'evy group see \cite{blumlinger}
and \cite{obata}.

\section{$\mcl{G}$-invariance}

To answer the question of $\mcl{G}$-invariance of a density
measure we use the representation of the L\'evy group $\mcl{G}$
with the help of statistical convergence.

We say that a real sequence $(x_n)$ {\em converges statistically
to $L$} ($\limstat\limits_{n \to \infty} x_n = L$) if for every
$\varepsilon>0$ the set
\begin{gather*}
A_\varepsilon = \{n;\,\abs{x_n-L} \geq \varepsilon \}
\end{gather*}
has zero density ($d(A_\varepsilon)=0$).

The following result is well-known (see Fridy \cite[Theorem
1]{fridy} or \v{S}al\'at \cite[Lemma 1.1]{sal1980}).

\begin{theorem}\label{fridy thm}
A sequence $(x_n)$ is statistically convergent to $L \in \R$ if
and only if there exists a set $A$ such that $d(A)=1$ and the
sequence $(x_n)$ converges to $L$ along the set $A$, i.e., $L$ is
limit of the subsequence $(x_n)_{n\in A}$.
\end{theorem}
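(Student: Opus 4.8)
The plan is to prove both implications separately. For the easy direction, suppose there is a set $A$ with $d(A)=1$ such that $x_n \to L$ along $A$. Fix $\varepsilon > 0$. Then the set $\{n \in A;\, \abs{x_n - L} \ge \varepsilon\}$ is finite (since $x_n \to L$ on $A$, all but finitely many $n \in A$ satisfy $\abs{x_n - L} < \varepsilon$), hence has density $0$. Therefore $A_\varepsilon = \{n;\, \abs{x_n - L} \ge \varepsilon\}$ is contained in the union of this finite set and $\N \setminus A$, and since $d(\N \setminus A) = 1 - d(A) = 0$, we get $\ud(A_\varepsilon) = 0$, so $d(A_\varepsilon) = 0$. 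Thus $(x_n)$ converges statistically to $L$.

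For the harder direction, assume $\limstat_{n\to\infty} x_n = L$; I need to build a single density-one set $A$ along which $x_n \to L$. The natural idea is a diagonal argument over the sets $B_j := A_{1/j} = \{n;\, \abs{x_n - L} \ge 1/j\}$. Each $B_j$ has density $0$, and $B_1 \subseteq B_2 \subseteq \cdots$. The obstacle is that the (plain) union $\bigcup_j B_j$ need not have density $0$, so I cannot simply take its complement. Instead I choose an increasing sequence of thresholds $0 = m_0 < m_1 < m_2 < \cdots$ such that for all $n \ge m_j$ one has $B_j(n)/n < 1/j$ (possible since $d(B_j) = 0$), and define the "bad" set $B := \bigcup_{j \ge 1} \big(B_j \cap (m_j, m_{j+1}]\big)$, then set $A := \N \setminus B$. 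The key steps are: (i) verify $d(B) = 0$, hence $d(A) = 1$; and (ii) verify $x_n \to L$ along $A$.

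For step (i): given $n$ with $m_j < n \le m_{j+1}$, every element of $B$ up to $n$ lies in $B_i \cap (m_i, m_{i+1}]$ for some $i \le j$, and since the $B_i$ are nested, all such elements that are $\le n$ lie in $B_j \cap (m_1, n]$ together with finitely many elements below $m_1$; more carefully, $B \cap [1,n] \subseteq (B_j \cap [1,n]) \cup [1, m_1]$, so $B(n) \le B_j(n) + m_1 < n/j + m_1$, giving $B(n)/n < 1/j + m_1/n \to 0$ as $n \to \infty$ (note $j \to \infty$ as $n \to \infty$). Hence $d(B) = 0$ and $d(A) = 1$. For step (ii): take $\varepsilon > 0$ and pick $j$ with $1/j < \varepsilon$; I claim that for $n \in A$ with $n > m_{j+1}$ we have $\abs{x_n - L} < \varepsilon$. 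Indeed, if $n \in A$ and $\abs{x_n - L} \ge \varepsilon > 1/i$ fails to be small, then $n \in B_i$ for the largest such $i \ge j$; but whatever interval $(m_k, m_{k+1}]$ contains $n$, we have $k \ge j+1 > i$ (since $n > m_{j+1}$), hence $n \in B_i \subseteq B_k$, so $n \in B_k \cap (m_k, m_{k+1}] \subseteq B$, contradicting $n \in A$. Therefore $\abs{x_n - L} < \varepsilon$ for all sufficiently large $n \in A$, which is exactly convergence of the subsequence $(x_n)_{n \in A}$ to $L$.

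The main obstacle is the bookkeeping in the diagonalization: one must interleave the nested bad sets $B_j$ with the threshold points $m_j$ so that, on the one hand, the resulting set $B$ is sparse enough to have density zero (this uses that beyond $m_j$ the set $B_j$ occupies less than a $1/j$ fraction), and on the other hand, $B$ still captures enough "bad" indices that its complement $A$ forces genuine convergence. Getting the quantifiers in the right order — fix $\varepsilon$, then $j$, then look only at $n > m_{j+1}$ — is the crux; once the construction is set up correctly, both verifications are short density estimates.
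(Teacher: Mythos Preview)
The paper does not actually prove this theorem: it is stated as a well-known result and attributed to Fridy \cite[Theorem~1]{fridy} and \v{S}al\'at \cite[Lemma~1.1]{sal1980}, with no argument given. So there is no ``paper's own proof'' to compare your proposal against.

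That said, your proposal is the standard diagonalization argument that appears in those references, and it is essentially correct. Two small remarks on the write-up. In step~(i) you do not need the extra $+m_1$: since $B\cap[1,m_1]=\emptyset$ by construction and $B_i\subseteq B_j$ for $i\le j$, one has directly $B(n)\le B_j(n)<n/j$ whenever $m_j<n\le m_{j+1}$. In step~(ii) there is a harmless muddle between the letters $i$ and $j$: what you want is simply that if $n\in A$, $n>m_j$, and $|x_n-L|\ge\varepsilon>1/j$, then $n\in B_j$; letting $k$ be the index with $m_k<n\le m_{k+1}$, the inequality $n>m_j$ forces $k\ge j$, hence $n\in B_j\subseteq B_k$ and $n\in B_k\cap(m_k,m_{k+1}]\subseteq B$, a contradiction. (Taking $n>m_{j+1}$ as you do also works, but $n>m_j$ already suffices.)
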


\begin{theorem}\label{levy stat}
A permutation $\pi: \N \to \N$ belongs to $\mcl{G}$ \iaoi
\begin{gather}\label{levy stat eq}
    \limstat_{n \to \infty} \frac{\pi(n)}n=1.
\end{gather}
\end{theorem}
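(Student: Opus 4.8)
The plan is to relate the Lévy condition \eqref{levy} and its symmetric counterpart to the statistical statement \eqref{levy stat eq} via Theorem~\ref{fridy thm}. First I would unpack what \eqref{levy stat eq} means: for every $\varepsilon>0$, the set $B_\varepsilon=\{n;\ \abs{\pi(n)/n - 1}\ge\varepsilon\}$ has density zero. The key observation is that $\abs{\pi(n)/n-1}\ge\varepsilon$ splits into the ``$\pi(n)$ too large'' part $\pi(n)\ge(1+\varepsilon)n$ and the ``$\pi(n)$ too small'' part $\pi(n)\le(1-\varepsilon)n$. So I would reduce the theorem to showing that $\pi\in\mcl G$ if and only if both of these sets have density zero for every $\varepsilon>0$.

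For the direction $\pi\in\mcl G\Rightarrow$ \eqref{levy stat eq}, I would use Lemma~\ref{lmBlum} with cleverly chosen sets $A$, or argue directly from \eqref{levy}. The cleanest route is probably: fix $\varepsilon>0$; the set $\{k;\ \pi(k)\ge(1+\varepsilon)n_0\}\cap[1,n_0]$ for a threshold $n_0$ is contained in $\{k;\ k\le n_0<\pi(k)\}$ once we rescale, but to get the factor $(1+\varepsilon)$ I would instead apply the Lévy condition at the point $m=\lfloor(1+\varepsilon)n\rfloor$ (or use a counting/pigeonhole argument): the elements $k\le n$ with $\pi(k)>(1+\varepsilon)n\ge m$ satisfy $k\le n\le m<\pi(k)$, hence their number is $o(m)=o(n)$, giving density zero of the ``too large'' part. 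The ``too small'' part is handled symmetrically by applying the same reasoning to $\pi^{-1}$, which also lies in $\mcl G$ (the Lévy group is a group, and conditions \eqref{levy}, \eqref{levy2} are interchanged by $\pi\leftrightarrow\pi^{-1}$). Combining, $B_\varepsilon$ has density zero, so \eqref{levy stat eq} holds.

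For the converse, assume \eqref{levy stat eq}. By Theorem~\ref{fridy thm} there is a set $A$ with $d(A)=1$ along which $\pi(n)/n\to 1$. I want to verify \eqref{levy}, i.e.\ that $C_n=\{k\le n;\ \pi(k)>n\}$ has $\abs{C_n}/n\to 0$. Split $C_n$ according to whether $k\in A$ or not. The contribution of $k\notin A$ is at most $\abs{[1,n]\setminus A}=o(n)$ since $d(A)=1$. For $k\in A\cap C_n$, convergence $\pi(k)/k\to1$ along $A$ means that for large $k\in A$ we have $\pi(k)<2k$, say; combined with $\pi(k)>n$ this forces $k>n/2$, so these $k$ lie in $(n/2,n]\cap A$ with $\pi(k)/k$ close to $1$ yet $\pi(k)>n\ge k$ — a more careful squeeze (choosing $\varepsilon$ small and using that $\pi(k)/k<1+\varepsilon$ for $k\in A$ large forces $k>n/(1+\varepsilon)$, confining $k$ to a short interval $(n/(1+\varepsilon),n]$ whose length is $\varepsilon n/(1+\varepsilon)$) shows $\abs{A\cap C_n}\le \varepsilon n + o(n)$. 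Since $\varepsilon$ is arbitrary, $\abs{C_n}/n\to0$, so $\pi\in\mcl G$.

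\textbf{Main obstacle.} The routine set-theoretic bookkeeping is easy; the genuine subtlety is the quantitative interplay between the rescaling factor $(1+\varepsilon)$ and the index $n$ in both directions — making sure that ``$\pi(k)/k$ close to $1$'' on a density-one set genuinely controls the count $\abs{\{k\le n;\ \pi(k)>n\}}$ uniformly in $n$, rather than only for $k$ in some sparse range. Handling the $o(n)$ error terms (the complement of $A$, and the finitely many ``bad'' $k\in A$) simultaneously with the $\varepsilon$-window argument is where care is needed, but no deep idea beyond Theorem~\ref{fridy thm} and the group structure of $\mcl G$ is required.
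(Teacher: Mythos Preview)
Your reverse direction is correct and matches the paper's argument: both observe that if $k\le n<\pi(k)$ then either $k$ lies in the density-zero set $\{k;\ \pi(k)>(1+\varepsilon)k\}$ or else $k>n/(1+\varepsilon)$, so the L\'evy count is at most $\varepsilon n + o(n)$. (The paper works directly with the density-zero set rather than invoking Theorem~\ref{fridy thm}, but the two formulations are interchangeable.)

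The forward direction, however, has a real gap. You correctly bound
\[
\bigl|\{k\le n;\ \pi(k)>(1+\varepsilon)n\}\bigr| \;\le\; \bigl|\{k;\ k\le m<\pi(k)\}\bigr| \;=\; o(n)\qquad (m=\lfloor(1+\varepsilon)n\rfloor),
\]
but this is \emph{not} the quantity needed: the ``too large'' set is $T_\varepsilon=\{k;\ \pi(k)>(1+\varepsilon)k\}$, and the set you bounded is only a \emph{subset} of $T_\varepsilon\cap[1,n]$ (if $k\le n$ and $\pi(k)>(1+\varepsilon)n$ then certainly $\pi(k)>(1+\varepsilon)k$, but the converse fails for small $k$). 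An upper bound on a subset says nothing about $T_\varepsilon(n)$ itself. Your single application of the L\'evy condition at one shifted point therefore does not give $d(T_\varepsilon)=0$; one would need to iterate over a geometric sequence of scales $n,\,n/(1+\varepsilon),\,n/(1+\varepsilon)^2,\ldots$ and sum, a nontrivial extra step absent from the plan.

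The paper sidesteps this by the route you mentioned but did not pursue: apply Lemma~\ref{lmBlum} to $A=T_\varepsilon$ itself. Since $k\in A$ and $\pi(k)\le n$ force $k\le n/(1+\varepsilon)$, one has $(\pi A)(n)\le A(\lfloor n/(1+\varepsilon)\rfloor)$; together with $\limsup A(n)/n = \limsup (\pi A)(n)/n$ from Lemma~\ref{lmBlum} this yields the self-referential inequality
\[
\limsup_{n\to\infty}\frac{A(n)}{n} \;\le\; \frac{1}{1+\varepsilon}\,\limsup_{n\to\infty}\frac{A(n)}{n},
\]
forcing $d(A)=0$ at once.
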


\begin{proof}
Let $\pi\in\mcl G$. Suppose we are given $\eps>0$. Denote
\begin{gather*}
A=\{k;\, \pi(k) - k > \eps k\} = \{k;\, \pi(k) > (1+\eps)k\},\\
B=\{k;\, k - \pi(k) > \eps k\} = \{k;\, \pi(k) < (1-\eps)k\}.
\end{gather*}
Obviously $C=A\cup B=\{k;\,\abs{\frac{\pi(k)}k-1}>\eps\}$. So it suffices to
show that $d(C)=0$, i.e., $\lim\limits_{n\to\infty} C(n)/n = 0$.

If $\pi(k)\leq n$ for some $k\in A$, we get $(1+\eps)k\leq \pi(k)
\leq n$ and $k\leq \frac n{1+\eps}$. Hence $(\pi A)(n)\leq
A\left(\llip{\frac n{1+\eps}}\right)$ and Lemma \ref{lmBlum} 
yields
\begin{gather*}
\limsup_{n \to \infty} \frac{A(n)}n =
\limsup_{n \to \infty} \frac{\pi A(n)}n \leq
\limsup_{n \to \infty} \frac{A\left(\llip{\frac n{1+\eps}}\right)}n\\
= \limsup_{n \to \infty} \frac{A\left(\llip{\frac n{1+\eps}}\right)}{\llip{\frac n{1+\eps}}}\frac{\llip{\frac n{1+\eps}}}n \leq
\limsup_{n \to \infty} \frac{A(n)}n \frac1{1+\eps}.
\end{gather*}
This implies immediately $d(A)=\limsup\limits_{n \to \infty}
\frac{A(n)}n =0$.

To show that $d(B)=0$ we can proceed analogously. Another
possibility is to notice that $B=\{k; \pi(k)<(1-\eps)k\}=
\inv\pi(\{l; l<(1-\eps)\inv\pi(l)\})\subseteq \inv \pi(\{l;
(1+\eps)l < \inv\pi(l)\})$ and repeat the same argument for the
permutation $\inv\pi\in\mcl{G}$.

Thus we get $d(C)=d(A)+d(B)=0$.

To prove the reverse implication assume that $\pi$ satisfies \refeq{levy stat eq}.
As before, taking $\eps >0$ let us denote $A =\{k;\, \pi(k) > (1+\eps)k\}$. Then $d(A)=0$.

If $k\leq n <\pi(k)$, then either $k\in A$ or $k>\frac n{1+\eps}$ (otherwise
$\pi(k) \leq (1+\eps) k \leq n$, contradicting $n < \pi(k)$). So
\begin{gather*}
\abs{\{ k;\, k\leq n <\pi(k) \}} \leq A(n) + n\left(1-\frac 1{1+\eps}\right) + 1\leq A(n) + n\eps + 1,\\
\limsup_{n \to \infty} \frac{\abs{\{ k;\, k\leq n <\pi(k) \}}}n \leq \eps + \lim\limits_{n \to \infty}
\frac{A(n)}n =\eps.
\end{gather*}
Since $\eps$ can be chosen arbitrarily small, we get
\begin{gather*}
\lim_{n \to \infty} \frac{\abs{\{ k;\, k\leq n <\pi(k) \}}}n = 0,
\end{gather*}
and $\pi \in \mcl{G}$.
\end{proof}

Van Douwen (see \cite[Theorem 1.12]{vandouwen})  characterized
density measures using invariance with respect to a~particular
kind of permutations.
\begin{theorem}\label{van douwen char}
A measure $\mu$ on $\N$ is a density measure if and only if
 $\mu(A) = \mu(\pi A)$ for all $A \subseteq \N$ and
all permutations $\pi: \N \to \N$ such that
\begin{gather}\label{van douwen eq}
\lim\limits_{n \to \infty} \frac{\pi (n)}{n} = 1.
\end{gather}
\end{theorem}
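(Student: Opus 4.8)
The plan is to prove both implications of the equivalence. Write $\mcl{G}_0$ for the set of permutations $\pi$ of $\N$ satisfying \refeq{van douwen eq}. Since ordinary convergence implies statistical convergence, Theorem~\ref{levy stat} gives $\mcl{G}_0\subseteq\mcl{G}$; hence, by Lemma~\ref{lmBlum}, every $\pi\in\mcl{G}_0$ satisfies $(\pi A)(n)-A(n)=o(n)$ for all $A\subseteq\N$. In particular a permutation from $\mcl{G}_0$ sends sets of density zero to sets of density zero, preserves $\ud$ and $\ld$, and maps $\mcl{D}$ into itself preserving density; recall also that a density measure annihilates sets of density zero and is finitely additive.

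For the direction ``$\mu$ a density measure $\Rightarrow$ $\mu(\pi A)=\mu(A)$ for all $\pi\in\mcl{G}_0$'': when $A\in\mcl{D}$ this is immediate, since then $\pi A\in\mcl{D}$ with $d(\pi A)=d(A)$, so $\mu(\pi A)=d(\pi A)=d(A)=\mu(A)$. For general $A$ the natural first step is the bound $\ld(A)\le\mu(A)\le\ud(A)$, obtained by padding $A$ with extra elements to a set $C\in\mcl{D}$ with $A\subseteq C$ and $d(C)=\ud(A)$, so $\mu(A)\le\mu(C)=\ud(A)$ (the lower bound by complementation); this shows $\mu(A)$ and $\mu(\pi A)$ both lie in $[\ld(A),\ud(A)]$, but when $A\notin\mcl{D}$ this interval is nondegenerate and the equality $\mu(A)=\mu(\pi A)$ is not yet visible. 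That case is the heart of the theorem and what I expect to be the main obstacle — and one may not invoke $\mcl{G}_0$-invariance of $\mu$, that being the conclusion. My attack: given $\eps>0$, combine $|\pi(n)-n|<\eps n$ for large $n$ with the L\'evy condition \refeq{levy} at the thresholds $\lfloor c(1+\eps)^i\rfloor$ (the ``leakage'' sets $\{m\le c(1+\eps)^i<\pi(m)\}$ have size $o(c(1+\eps)^i)$, and as the thresholds grow geometrically the union of these sets still has density zero), reducing — modulo a density-zero set, which $\mu$ ignores — to comparing $\mu(S)$ with $\mu(T)$ for $S=\bigsqcup_iS_i$, $T=\bigsqcup_iT_i$ supported blockwise on $[c(1+\eps)^i,c(1+\eps)^{i+1})$ with $|S_i|=|T_i|$ for all $i$ and $\mu(S)=\mu(\pi A)$, $\mu(T)=\mu(A)$. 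The decisive step is to show that a density measure cannot tell two such blockwise-equidistributed sets apart; this delicate combinatorial fact is precisely what makes the theorem nontrivial, and is where I would follow van Douwen's original argument most closely.

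For the converse, let $\mu$ be a measure with $\mu(\pi A)=\mu(A)$ for all $A$ and all $\pi\in\mcl{G}_0$; it suffices to check $\mu|_{\mcl{D}}=d$. Fixing $q\ge1$ and letting $R_0,\dots,R_{q-1}$ be the residue classes modulo $q$, the permutation that cyclically shifts each block $\{kq+1,\dots,kq+q\}$ by one displaces every point by at most $q-1$, hence lies in $\mcl{G}_0$, and sends $R_j$ to $R_{j+1}$; so all $\mu(R_j)$ are equal, and since they partition $\N$ with $\mu(\N)=1$ each equals $1/q$, whence every union of $p$ of them has measure $p/q$. Next, if $E,F\in\mcl{D}$ have a common density $\alpha\in(0,1)$, enumerate $E,F,E^c,F^c$ increasingly and let $\tau$ carry the $j$-th member of $E$ to the $j$-th member of $F$ and the $j$-th member of $E^c$ to the $j$-th member of $F^c$; since the $j$-th member of a set of density $\beta>0$ is asymptotic to $j/\beta$, one gets $\tau(n)/n\to1$, so $\tau\in\mcl{G}_0$ and $\tau(E)=F$, giving $\mu(E)=\mu(F)$ — together with the previous step this yields $\mu(A)=d(A)$ whenever $d(A)$ is rational and lies in $(0,1)$. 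Finally, for $A\in\mcl{D}$ with $d(A)=\alpha\in(0,1)$: thinning the enumeration of $A$ along an index set of density $r/\alpha$ gives a subset $A'\subseteq A$ in $\mcl{D}$ with $d(A')=r$, so $\mu(A)\ge\mu(A')=r$ for every rational $r<\alpha$; adjoining to $A$ a suitable subset of $A^c$ of density $s-\alpha$ gives a superset in $\mcl{D}$ of density $s$, so $\mu(A)\le s$ for every rational $s\in(\alpha,1)$; letting $r\uparrow\alpha$ and $s\downarrow\alpha$ yields $\mu(A)=\alpha$. The endpoint cases $\alpha\in\{0,1\}$ follow from the one-sided halves of this argument together with complementation, completing the proof that $\mu|_{\mcl{D}}=d$.
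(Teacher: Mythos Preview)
The paper does not prove this theorem at all: it is quoted as \cite[Theorem 1.12]{vandouwen} and used as a black box in the proof of Proposition~\ref{prop Ginv}. So there is no ``paper's own proof'' to compare your attempt against, and your proposal must be judged on its own merits.

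Your converse direction (invariance under $\mcl G_0$ implies $\mu|_{\mcl D}=d$) is correct and complete. The cyclic shift within blocks of length $q$ is indeed in $\mcl G_0$ and forces $\mu(R_j)=1/q$; the order-preserving bijection $\tau$ between two sets of the same density $\alpha\in(0,1)$ (and between their complements) does satisfy $\tau(n)/n\to 1$ since the $j$-th element of a set of density $\beta$ is $\sim j/\beta$; and the squeezing argument from rationals to arbitrary $\alpha$ is fine.

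Your forward direction, however, is not a proof but a programme. You reduce (modulo density-zero sets) to comparing $\mu(S)$ with $\mu(T)$ for sets that agree in cardinality on each geometric block $[c(1+\eps)^i,c(1+\eps)^{i+1})$, and then you write that the ``decisive step'' --- that a density measure cannot distinguish such $S$ and $T$ --- is ``where I would follow van Douwen's original argument most closely''. That step is exactly the content of the theorem: a within-block permutation sending $S$ to $T$ only satisfies $|\sigma(n)/n-1|<\eps$, not $\sigma(n)/n\to 1$, so you cannot simply invoke $\mcl G_0$-invariance, and nothing in your sketch supplies the missing mechanism (e.g.\ an $\eps$-dependent estimate $|\mu(S)-\mu(T)|\le C\eps$ obtained from the axioms of a density measure alone, followed by $\eps\to 0$). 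As written, the forward implication is incomplete. Note also that the paper's later Proposition~\ref{prop Ginv} \emph{uses} the present theorem, so you may not appeal to $\mcl G$-invariance of density measures here even though $\mcl G_0\subseteq\mcl G$.
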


Van Douwen proved even more, but here we only need the above
result.

One can see easily that if a permutation $\pi$ fulfills \refeq{van
douwen eq}, then it fulfills also \refeq{levy stat eq}. Using this
fact we get

\begin{proposition}
If a measure $\mu$ on $\N$ is $\mcl{G}$-invariant, then it is a density measure.
\end{proposition}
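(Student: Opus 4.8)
The plan is to reduce $\mcl{G}$-invariance to van Douwen's characterization (Theorem~\ref{van douwen char}). That theorem says a measure $\mu$ is a density measure if and only if it is invariant under every permutation $\pi$ satisfying $\lim_{n\to\infty}\pi(n)/n = 1$. So it suffices to observe that every such permutation lies in the L\'evy group $\mcl{G}$; then $\mcl{G}$-invariance of $\mu$ immediately forces invariance under all these van-Douwen permutations, and Theorem~\ref{van douwen char} delivers the conclusion.

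Concretely, I would first record the elementary fact (already noted in the excerpt) that if $\pi$ fulfills \refeq{van douwen eq}, i.e.\ $\lim_{n\to\infty}\pi(n)/n = 1$, then in particular $\limstat_{n\to\infty}\pi(n)/n = 1$, since ordinary convergence trivially implies statistical convergence (any set of the form $A_\eps$ is finite, hence of density zero). By Theorem~\ref{levy stat}, this statistical condition \refeq{levy stat eq} is equivalent to $\pi\in\mcl{G}$. Hence every van-Douwen permutation belongs to $\mcl{G}$.

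Now suppose $\mu$ is $\mcl{G}$-invariant, meaning $\mu(A) = \mu(\pi A)$ for all $A\subseteq\N$ and all $\pi\in\mcl{G}$. Given any permutation $\pi$ with $\lim_{n\to\infty}\pi(n)/n = 1$, the previous paragraph shows $\pi\in\mcl{G}$, so $\mu(A) = \mu(\pi A)$ for all $A$. Thus $\mu$ satisfies exactly the invariance hypothesis of Theorem~\ref{van douwen char}, and therefore $\mu$ is a density measure.

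There is no real obstacle here: the whole argument is a one-line deduction once Theorem~\ref{levy stat} (the statistical characterization of $\mcl{G}$) and Theorem~\ref{van douwen char} are in hand. The only thing to be careful about is the direction of the implication between the two permutation conditions: the van Douwen condition \refeq{van douwen eq} is \emph{stronger} than the L\'evy condition \refeq{levy stat eq}, so the class of van-Douwen permutations is contained in $\mcl{G}$, which is precisely what makes $\mcl{G}$-invariance the stronger hypothesis and hence sufficient. (The converse direction---that a density measure is $\mcl{G}$-invariant---is the harder half and is presumably treated separately; it is not claimed by this proposition.)
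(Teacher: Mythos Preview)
Your argument is correct and is exactly the paper's approach: the paper simply observes that condition \refeq{van douwen eq} implies \refeq{levy stat eq}, so by Theorem~\ref{levy stat} every van Douwen permutation lies in $\mcl{G}$, and Theorem~\ref{van douwen char} then gives the result.
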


This result can also be deduced from Bl\"umlinger and Obata
\cite[Theorem 2]{blumoba}, where it was proved by different means.

Next we will show that the reverse of this proposition is true as well.

\begin{proposition}\label{prop Ginv}
If $\pi\in\mcl{G}$ and $\mu$ is a density measure, then for each
$A\subseteq\N$,
\begin{equation}\label{G invariance eq}
    \mu(\pi A)=\mu(A).
\end{equation}
\end{proposition}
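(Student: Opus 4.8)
The plan is to reduce the statement to Van Douwen's theorem (Theorem~\ref{van douwen char}) by showing that the action of a single $\pi\in\mcl G$ on an arbitrary set $A$ can be matched, from the point of view of any density measure, by the action of a permutation satisfying the stronger condition \refeq{van douwen eq}. Concretely, fix $\pi\in\mcl G$ and $A\subseteq\N$. By Theorem~\ref{levy stat} we have $\limstat_{n\to\infty}\pi(n)/n=1$, and by Theorem~\ref{fridy thm} there is a set $D$ with $d(D)=1$ such that $\pi(n)/n\to 1$ along $D$. The idea is that outside a density-zero set $\pi$ ``looks like'' a Van Douwen permutation, so $\pi A$ and $A$ should agree up to a density-zero discrepancy that $\mu$ cannot see.

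First I would make the comparison precise. Since $d(D)=1$, finite additivity and \ref{dm2} give $\mu(E)=0$ for every $E\subseteq\N\setminus D$, and more generally $\mu(X)=\mu(X\cap D)$ for all $X$. I would like to build an auxiliary permutation $\sigma$ that agrees with $\pi$ on a large (density-one) set and satisfies $\lim_n\sigma(n)/n=1$; then Theorem~\ref{van douwen char} gives $\mu(\sigma B)=\mu(B)$ for all $B$, and it remains to check that $\mu(\pi A)=\mu(\sigma A)$ because the two sets differ only on something $\mu$ annihilates. The construction of $\sigma$ is the delicate point: one cannot simply set $\sigma=\pi$ on $D$ and extend arbitrarily, because $D$ need not be $\pi$-invariant and the extension must remain a bijection while keeping $\sigma(n)/n\to1$ everywhere. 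The standard fix is to pass to a subset $D'\subseteq D$ with $d(D')=1$ that is simultaneously controlled under $\pi$ and $\inv\pi$ (using that $\inv\pi\in\mcl G$ too, so $\inv\pi(n)/n\to1$ statistically as well), arrange that $\pi$ maps $D'$ bijectively onto a set whose complement also has density zero, and let $\sigma$ coincide with $\pi$ on $D'$ and be any bijection between the two small complements; since both complements have density zero, the values of $\sigma$ on them are irrelevant to $\mu$ and one can choose them (e.g.\ by interleaving) so that $\sigma(n)/n\to1$ fails only on a finite set, hence not at all after a further adjustment.

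Once $\sigma$ is in hand, the rest is bookkeeping. We have $\mu(\sigma A)=\mu(A)$ by Theorem~\ref{van douwen char} and the observation preceding it (that \refeq{van douwen eq} is exactly the hypothesis needed). On the other hand $\pi A\,\triangle\,\sigma A\subseteq (\N\setminus D')\cup\pi(\N\setminus D')$, and $d(\N\setminus D')=0$ while $d(\pi(\N\setminus D'))=0$ by Lemma~\ref{lmBlum} (or directly from $\inv\pi\in\mcl G$ via Theorem~\ref{levy stat}), so the symmetric difference has $\mu$-measure zero, whence $\mu(\pi A)=\mu(\sigma A)=\mu(A)$.

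The main obstacle I anticipate is the construction of $\sigma$: turning the statistical statement ``$\pi(n)/n\to1$ off a density-zero set'' into an honest permutation with $\lim\sigma(n)/n=1$ requires care to preserve bijectivity while simultaneously controlling the behaviour of $\pi$ and $\inv\pi$ on a common density-one set, and to make the complement sets over which $\sigma$ is defined freely both have density zero. If a clean direct construction proves awkward, an alternative route is to avoid building $\sigma$ altogether: instead approximate $A$ and $\pi A$ by sets in $\mcl D$ up to density $\eps$, use that $\pi$ preserves density of sets in a suitable cofinite-density sense (again via Lemma~\ref{lmBlum}), and let $\eps\to0$; this trades the combinatorial construction for an $\eps$-estimate that leans on \ref{dm1} and \ref{dm2}.
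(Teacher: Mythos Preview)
Your overall strategy---reduce to Van Douwen's Theorem~\ref{van douwen char} by replacing $\pi$ with a permutation satisfying \refeq{van douwen eq} that agrees with $\pi$ up to a density-zero discrepancy---is exactly the paper's. The bookkeeping paragraph is fine. The genuine gap is precisely where you suspect: the construction of $\sigma$. You assert that on the two density-zero leftover sets $\N\setminus D'$ and $\pi(\N\setminus D')$ ``one can choose [the bijection] so that $\sigma(n)/n\to1$ fails only on a finite set,'' but this is not true in general. For instance, let $\pi$ cyclically shift each block $[k!,(k+1)!)$; then $\pi\in\mcl G$, the minimal bad set is $C=\{(k+1)!-1\}$, and with $D'=\N\setminus C$ one must biject $\{(k+1)!-1:k\ge1\}$ onto $\{k!:k\ge1\}$, which forces $\sigma(n)/n\to0$ along a subsequence. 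Enlarging $C$ does not obviously help, and making $D'$ $\pi$-invariant (so that $\sigma=\mathrm{id}$ on the complement would do) can fail outright, since the $\pi$-saturation of a density-zero set may be all of $\N$, as in this same example. Note too that ``the values of $\sigma$ on them are irrelevant to $\mu$'' is beside the point: they are very relevant to whether \refeq{van douwen eq} holds, which is what you need. Your fallback $\eps$-approximation route also fails: a set $A$ with $\ld(A)=0$, $\ud(A)=1$ cannot be approximated in upper density by any set in $\mcl D$.

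The paper resolves this with one extra idea: rather than modify $\pi$, it first replaces $\pi$ by an $A$-dependent auxiliary permutation. Writing $B=\pi A$, $A'=A\setminus B=\{a_1<a_2<\dots\}$, $B'=B\setminus A=\{b_1<b_2<\dots\}$, define $\varphi$ by $\varphi(a_i)=b_i$, $\varphi(b_i)=a_i$, and identity elsewhere. Since $|A'(n)-B'(n)|=|A(n)-B(n)|$, Lemma~\ref{lmBlum} gives $\varphi\in\mcl G$; moreover $\varphi(A')=B'$ and, crucially, $\varphi=\inv\varphi$. Now apply your own argument to $\varphi$: take $F$ of density zero with $\varphi(n)/n\to1$ off $F$, set $F'=A'\cap(F\cup\varphi F)$ and $E=F'\cup\varphi F'$. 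Because $\varphi$ is an involution, $\varphi E=E$, so redefining $\varphi$ to be the identity on $E$ still yields a permutation $\psi$; and now $\psi(n)/n\to1$ everywhere while $\psi(A'\setminus F')=B'\setminus\varphi F'$. Van Douwen then gives $\mu(A')=\mu(B')$, hence $\mu(A)=\mu(B)$. The self-inverse property of $\varphi$ is exactly what makes the ``preserve bijectivity'' step trivial---the step your direct approach cannot complete.
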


\begin{proof}
Let $\pi\in\mcl{G}$, $A \subseteq \N$ and $B=\pi A$. Define
\begin{gather*}
A' = A\smallsetminus(A\cap B) = \{a_1 < a_2 < a_3 < \ldots\},\\
B' = B\smallsetminus(A\cap B) = \{b_1 < b_2 < b_3 < \ldots\}.
\end{gather*}

As $A = A' \cup (A\cap B)$ and $B = B' \cup (A\cap B)$, it suffices to prove that $\mu(A') = \mu(B')$.

Without loss of generality we can assume that $A'$ and $B'$ are infinite,
otherwise we get $d(A')=d(B')=0$ (if
one of the sets is finite, then it has zero density and since $\lim\limits_{n \to \infty}
\frac{A'(n)-B'(n)}{n}= \lim\limits_{n \to \infty}\frac{A(n)-B(n)}{n} = 0$,
the other one has the same density) and $\mu(A)=\mu(A\cap
B)=\mu(B)$.

Let us define a permutation $\varphi : \N \to \N$ by
\begin{gather*}
\begin{array}{cl}
\varphi(n)=n &\text{if }n\notin A'\cup B';\\
\varphi(a_i)=b_i &\text{for }i=1,2,\ldots;\\
\varphi(b_i)=a_i &\text{for }i=1,2,\ldots
\end{array}
\end{gather*}
We claim that $\varphi\in\mcl{G}$. Indeed, as one of the sets
$\{i;\, a_i\leq n < b_i\}$, $\{i;\, b_i\leq n < a_i\}$
is empty, we get  $\abs{\{k;\, k\leq n < \varphi(k)\}}=
\abs{\{i;\, a_i\leq n < b_i\}} + \abs{\{i;\, b_i\leq n < a_i\}} =
\abs{A'(n)-B'(n)}=\abs{A(n)-B(n)}$ and $\limti n \frac{\abs{A(n)-B(n)}}{n} = 0$ by Lemma \ref{lmBlum}. 
Moreover $\varphi(A')=B'$ and $\inv\varphi=\varphi$.

By 
Theorems \ref{fridy thm} and \ref{levy stat}
there exists a set $F$ such that $d(F)=0$ and
\begin{gather}\label{lim - F}
\lim\limits_{\substack{n\in\N\smallsetminus F\\n\to\infty}} \frac{\varphi(n)}{n} = 1.
\end{gather}
Set $F':=A'\cap (F\cup
\varphi F)$ and $E:=F'\cup \varphi F'$. Clearly $F'\subseteq A'$
and $\varphi F'=B'\cap(F\cup \varphi F) \subseteq B'$. Since the
permutations in $\mcl{G}$ preserve density, we get $d(\varphi
F)=0$ and $d(F\cup\varphi F) = 0$. Thus $d(F')=d(\varphi F')=0$.

We modify the permutation $\varphi$ a little bit to get a permutation satisfying
\refeq{van douwen eq}.
$$\psi(n)=
\begin{cases}
n, &n\in E;\\
\varphi(n), &n\notin E.
\end{cases}$$
(The equality $\varphi E=E$ holds since $\varphi = \inv\varphi$.
So by changing the permutation $\varphi$ on the set $E$ to
identity map we get again a permutation of $\N$.) If $n\in F$,
then either $n\in E$ and $\psi(n)=n$, or $n\notin A'\cup B'$ and
$\psi(n)=\varphi(n)=n$. Hence, using \refeq{lim - F} we get
\begin{gather*}
\lim\limits_{n\to\infty} \frac{\psi(n)}{n}=1.
\end{gather*}
Moreover $(A'\smallsetminus F')\cap E = (A'\smallsetminus F')\cap \varphi
F' \subseteq A' \cap B' = \emptyset$. Thus $A'\smallsetminus F'$ and $E$ are
disjoint and $\psi$ coincides with $\varphi$ on the set $A'\smallsetminus F'$.
Then $\psi(A'\smallsetminus F') =\varphi(A'\smallsetminus F') =B'\smallsetminus \varphi F'$.

Now, by Theorem \ref{van douwen char}
we get $\mu(A'\smallsetminus F')=\mu(\psi(A'\smallsetminus F'))=\mu (B'\smallsetminus \varphi F')$,
and finally
\begin{gather*}
\begin{aligned}
\mu(A') &= \mu(A'\smallsetminus F')+\mu(F')=\mu(A'\smallsetminus F')\\
&= \mu(B'\smallsetminus\varphi F') = \mu(B'\smallsetminus\varphi F') + \mu(\varphi F') =\mu(B').
\end{aligned}
\end{gather*}
\end{proof}

The last two propositions together give us the main result of this
paper.

\begin{theorem}\label{G invariance}
A measure $\mu$ on $\N$ is a density measure if and only if
it is $\mcl{G}$-in\-va\-riant, i.e.,  $\mu(A) = \mu(\pi A)$ for all $A \subseteq \N$ and
all permutations $\pi \in \mcl{G}$.
\end{theorem}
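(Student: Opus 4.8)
The plan is to derive Theorem \ref{G invariance} directly from the two propositions established immediately above it, since together they already say precisely this. For the ``only if'' direction, assume $\mu$ is a density measure; then Proposition \ref{prop Ginv} gives $\mu(\pi A)=\mu(A)$ for every $\pi\in\mcl{G}$ and every $A\subseteq\N$, which is exactly the statement that $\mu$ is $\mcl{G}$-invariant. For the ``if'' direction, assume $\mu$ is $\mcl{G}$-invariant; the first of the two propositions applies verbatim. (Its argument runs: a permutation with $\pi(n)/n\to 1$ satisfies \refeq{levy stat eq}, hence lies in $\mcl{G}$ by Theorem \ref{levy stat}; so $\mcl{G}$-invariance forces invariance under all permutations fulfilling van Douwen's condition \refeq{van douwen eq}, and Theorem \ref{van douwen char} then yields that $\mu$ is a density measure.)

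Because both implications are already isolated as propositions, there is essentially nothing left to prove: Theorem \ref{G invariance} is a two-line corollary whose proof consists only of quoting the two propositions in turn. If I trace back where the genuine work lives, it is entirely inside Proposition \ref{prop Ginv}: given $\pi\in\mcl{G}$ and an arbitrary $A\subseteq\N$, one must manufacture a permutation $\psi$ satisfying the pointwise condition \refeq{van douwen eq} that still carries $A$ onto $\pi A$ up to a set of density zero, and then apply Theorem \ref{van douwen char} on that corrected set. The mechanism making this possible is the statistical-convergence characterization of $\mcl{G}$ from Theorem \ref{levy stat} combined with Fridy's theorem (Theorem \ref{fridy thm}), which converts $\limstat_{n\to\infty}\varphi(n)/n = 1$ into ordinary convergence along a density-one set; but that is already carried out, so at the level of the final theorem no obstacle remains.
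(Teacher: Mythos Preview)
Your proposal is correct and matches the paper's approach exactly: the paper introduces Theorem \ref{G invariance} with the sentence ``The last two propositions together give us the main result of this paper'' and supplies no further proof. Your tracing of where the real work lies (inside Proposition \ref{prop Ginv}, via Theorems \ref{fridy thm}, \ref{levy stat}, and \ref{van douwen char}) is also accurate.
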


\section{Applications}

We proved in Proposition \ref{prop Ginv} that every density
measure is $\pi$-invariant for permutations $\pi\in\mcl G$. It is
natural to ask whether there are other permutations with this
property. Proposition \ref{prop piinv} states that this property
characterizes L\'evy group.

\begin{proposition}\label{prop piinv}
If $\pi$ is a permutation such that every density measure is
$\pi$-invariant, i.e., $\mu(\pi A)=\pi A$ for every $A\subset\N$
and every density measure $\mu$, then $\pi\in\mcl G$.
\end{proposition}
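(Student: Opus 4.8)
The natural approach is to prove the contrapositive: assuming $\pi\notin\mcl G$, I would produce a single density measure $\mu$ and a set $A$ with $\mu(\pi A)\neq\mu(A)$. Since Lemma~\ref{lmBlum} characterizes non-membership in $\mcl G$ precisely by the failure of \refeq{blum eq} for some set, that set $A$ is handed to me directly by the lemma, and for $\mu$ I would use one of the ultrafilter density measures $\mu_\FF$ recalled in the Preliminaries, with $\FF$ chosen to ``see'' the discrepancy between $A$ and $\pi A$.

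In detail: invoke Lemma~\ref{lmBlum} to obtain $A\subseteq\N$ for which $(A(n)-(\pi A)(n))/n$ does not converge to $0$. Put $B=\pi A$. Since $A(n)/n,\,B(n)/n\in[0,1]$, a sequence that does not tend to $0$ has $\limsup$ of its absolute value strictly positive, so $c:=\limsup_{n\to\infty}\abs{(A(n)-B(n))/n}>0$; hence there are $\delta\in(0,c)$ and an infinite set $S\subseteq\N$ with $\abs{(A(n)-B(n))/n}\ge\delta$ for every $n\in S$, and after passing to an infinite subset of $S$ I may also assume $A(n)-B(n)$ has a fixed sign on $S$. Now fix a free ultrafilter $\FF\in\beta\N^*$ with $S\in\FF$ (such $\FF$ exists because $S$ is infinite). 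Then $\mu_\FF$ is a density measure, and by additivity of the limit along $\FF$ on bounded sequences,
\[
\mu_\FF(\pi A)-\mu_\FF(A)=\flim\frac{B(n)}{n}-\flim\frac{A(n)}{n}=\flim\frac{B(n)-A(n)}{n}.
\]
Because $S\in\FF$ and the values $(B(n)-A(n))/n$ for $n\in S$ all lie in $[\delta,1]$ or all in $[-1,-\delta]$, the $\FF$-limit lies in that same interval and so is nonzero. Thus $\mu_\FF(\pi A)\neq\mu_\FF(A)$, contradicting the hypothesis that every density measure is $\pi$-invariant, and therefore $\pi\in\mcl G$.

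I do not expect a genuine obstacle; the proof is essentially a repackaging of Lemma~\ref{lmBlum} together with the ultrafilter construction. The only steps needing a line of justification are the passage from ``does not converge to $0$'' to ``$\limsup$ of the absolute value is positive'', the selection of a constant-sign infinite subset of $S$, and the elementary fact that $\flim x_n$ lies in the closure of $\{x_n:n\in T\}$ for every $T\in\FF$ (which also gives the additivity identity used above). All of these are standard properties of limits along ultrafilters, covered by the references cited in the Preliminaries.
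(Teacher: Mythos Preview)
Your argument is correct and follows essentially the same route as the paper: negate Lemma~\ref{lmBlum} to obtain a set $A$ for which $(A(n)-(\pi A)(n))/n\not\to 0$, pass to a subsequence on which the discrepancy is bounded away from $0$, and evaluate the ultrafilter density measure $\mu_{\FF}$ for a free ultrafilter $\FF$ containing that subsequence. The paper's version is terser---it picks a subsequence along which the limit exists and equals some $a>0$ and concludes $\mu(A)=\mu(\pi A)+a$---whereas you spell out the $\limsup$ and constant-sign extraction; the content is the same.
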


\begin{proof}
Suppose that there is a permutation $\pi\notin\mcl{G}$ such that
every density measure $\mu$ is $\pi$-invariant. By Lemma \ref{lmBlum} 
then there exist a set $A\subseteq\N$ and a sequence $n_k$ with
$$\limti k \frac{A(n_k)-(\pi A)(n_k)}{n_k}=a>0.$$
Then any free ultrafilter $\FF$ with $\{n_k; k\in\N\}\in\FF$
yields a density measure $\mu(A)=\flim\frac{A(n)}n$ such that
$\mu(A)=\mu(\pi A)+a$.
\end{proof}

Using some known facts on the L\'evy group we can characterize the
pairs of sets having the property $\mu(A)=\mu(B)$ for every
density measure $\mu$. We will need the following result.

\begin{lemma}{\cite[Lemma 3]{blumlinger}} \label{lm Blum}
Let  $A,B\subseteq \N$ such that $A,B,\N\smallsetminus A,\N\smallsetminus B$ are infinite sets.
Then there is a $\pi\in\mcl G$ with $B=\pi A$ \iaoi $\limti n \frac{A(n)-B(n)}n=0$.
\end{lemma}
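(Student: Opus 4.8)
The plan is to treat the two implications separately. The forward direction is essentially immediate: if $\pi\in\mcl G$ and $B=\pi A$, then $(\pi A)(n)=B(n)$ for every $n$, so Lemma \ref{lmBlum} applied to the set $A$ gives at once $\limti n \frac{A(n)-B(n)}n=\limti n \frac{A(n)-(\pi A)(n)}n=0$. The content of the lemma is therefore the reverse direction, which calls for an explicit construction of a permutation in $\mcl G$ carrying $A$ onto $B$.

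So assume $\limti n \frac{A(n)-B(n)}n=0$. Since $A$, $B$, $\N\smallsetminus A$, $\N\smallsetminus B$ are all infinite, I would enumerate them increasingly as $A=\{a_1<a_2<\ldots\}$, $B=\{b_1<b_2<\ldots\}$, $\N\smallsetminus A=\{a_1'<a_2'<\ldots\}$, $\N\smallsetminus B=\{b_1'<b_2'<\ldots\}$, and define $\pi\colon\N\to\N$ by $\pi(a_i)=b_i$ and $\pi(a_i')=b_i'$ for all $i$. This is a well-defined permutation of $\N$ with $\pi A=B$ by construction, and the four infiniteness assumptions are exactly what make the enumerations (hence $\pi$) legitimate.

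It then remains to verify $\pi\in\mcl G$ directly from the defining condition \refeq{levy}. Splitting $\{k;\,k\le n<\pi(k)\}$ according to whether $k\in A$: if $k=a_i$, the requirement $a_i\le n<b_i$ is equivalent to $B(n)<i\le A(n)$, contributing $\max\{0,A(n)-B(n)\}$ values of $i$; if $k=a_i'$, then using $(\N\smallsetminus A)(n)=n-A(n)$ and $(\N\smallsetminus B)(n)=n-B(n)$, the requirement $a_i'\le n<b_i'$ is equivalent to $n-B(n)<i\le n-A(n)$, contributing $\max\{0,B(n)-A(n)\}$ values. Since $\max\{0,x\}+\max\{0,-x\}=\abs x$, this yields the clean identity $\abs{\{k;\,k\le n<\pi(k)\}}=\abs{A(n)-B(n)}$; dividing by $n$ and letting $n\to\infty$ gives $\pi\in\mcl G$.

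I do not expect a genuine obstacle here: the only delicate points are that the four increasing enumerations are legitimate (precisely where the infiniteness hypotheses enter) and the index bookkeeping in the counting step above. As an alternative to invoking \refeq{levy} one could check $\pi\in\mcl G$ through Lemma \ref{lmBlum}, by estimating $X(n)-(\pi X)(n)$ for an arbitrary $X\subseteq\N$, but the computation with \refeq{levy} is shorter and self-contained.
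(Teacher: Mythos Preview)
Your argument is correct. Note, however, that the paper does not supply its own proof of this lemma: it is simply quoted as \cite[Lemma 3]{blumlinger}, so there is no in-paper proof to compare against. Your construction---sending the $i$th element of $A$ to the $i$th element of $B$ and the $i$th element of $\N\smallsetminus A$ to the $i$th element of $\N\smallsetminus B$---is the natural one, and the counting identity $\abs{\{k;\,k\le n<\pi(k)\}}=\abs{A(n)-B(n)}$ is verified correctly.

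It is worth remarking that the paper does carry out a closely related construction inside the proof of Proposition~\ref{prop Ginv}: there, for \emph{disjoint} sets $A'$ and $B'$, the involution $\varphi$ swapping the $i$th element of $A'$ with the $i$th element of $B'$ (and fixing everything else) is shown to lie in $\mcl G$ via precisely the same identity $\abs{\{k;\,k\le n<\varphi(k)\}}=\abs{A'(n)-B'(n)}$. Your $\pi$ is the analogue when $A$ and $B$ need not be disjoint, handling the complements symmetrically with $A$ and $B$; this is what one expects Bl\"umlinger's original argument to do as well.
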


\begin{proposition}
Let $A,B\subseteq\N$. Then $\limti n \frac{A(n)-B(n)}n=0$ \iaoi $\mu(A)=\mu(B)$ for every
density measure $\mu$.
\end{proposition}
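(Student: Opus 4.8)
The plan is to prove the two implications separately, using Lemma~\ref{lm Blum} together with Theorem~\ref{G invariance} for one direction and an ultrafilter construction mirroring the proof of Proposition~\ref{prop piinv} for the other.

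For the implication ``$\limti n \frac{A(n)-B(n)}n = 0$ $\Rightarrow$ $\mu(A)=\mu(B)$ for every density measure $\mu$'', I would first dispose of the degenerate cases in which at least one of the four sets $A$, $B$, $\N\smallsetminus A$, $\N\smallsetminus B$ is finite. If $A$ or $B$ is finite, then that set has density $0$, and since $\limti n\frac{A(n)-B(n)}n=0$ the other one has density $0$ as well; if $\N\smallsetminus A$ or $\N\smallsetminus B$ is finite, then by the same reasoning applied to the complements both $A$ and $B$ have density $1$. In either situation $A,B\in\mcl D$ with $d(A)=d(B)$, so condition \ref{dm2} of the definition of a density measure gives $\mu(A)=d(A)=d(B)=\mu(B)$. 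In the remaining case all four sets are infinite, so Lemma~\ref{lm Blum} provides a permutation $\pi\in\mcl G$ with $B=\pi A$, and then $\mu(A)=\mu(\pi A)=\mu(B)$ by $\mcl G$-invariance of density measures (Theorem~\ref{G invariance}, or already Proposition~\ref{prop Ginv}).

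For the converse I argue by contraposition, exactly as in the proof of Proposition~\ref{prop piinv}. Assume $\limti n\frac{A(n)-B(n)}n$ is not $0$. The sequence $\bigl(\frac{A(n)-B(n)}n\bigr)_n$ takes values in $\intrv{-1}{1}$, so it has a subsequence $\bigl(\frac{A(n_k)-B(n_k)}{n_k}\bigr)_k$ converging to some $a\neq 0$; interchanging $A$ and $B$ if necessary we may assume $a>0$. Choose a free ultrafilter $\FF$ with $\{n_k;\,k\in\N\}\in\FF$. Since a bounded sequence that converges to $a$ along a set belonging to $\FF$ has $\FF$-limit equal to $a$ (using that $\FF$ is free, hence contains no finite set), we get $\flim\frac{A(n)-B(n)}n=a$. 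Therefore the density measure $\mu_\FF$ satisfies
\begin{gather*}
\mu_\FF(A)-\mu_\FF(B)=\flim\frac{A(n)}n-\flim\frac{B(n)}n=\flim\frac{A(n)-B(n)}n=a\neq 0,
\end{gather*}
so $\mu_\FF(A)\neq\mu_\FF(B)$, contradicting the hypothesis.

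The only point requiring any care is the case analysis in the first implication: one must check that ``none of $A$, $B$, $\N\smallsetminus A$, $\N\smallsetminus B$ is finite'' is precisely the hypothesis needed to invoke Lemma~\ref{lm Blum}, and that each excluded case really forces $A$ and $B$ into $\mcl D$ with a common density value. Apart from that, both directions are direct applications of results already established.
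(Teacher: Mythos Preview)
Your proof is correct and follows essentially the same approach as the paper's: the forward implication via the degenerate case analysis and Lemma~\ref{lm Blum} together with $\mcl G$-invariance is identical, and your contrapositive argument for the converse is just the paper's direct argument (``$\flim \frac{A(n)-B(n)}n=0$ for every $\FF\in\beta\N^*$ forces the limit to be $0$'') read backwards.
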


\begin{proof}
Assume that $\limti n \frac{A(n)-B(n)}n=0$. If we moreover assume that $A$ and
$B$ are neither finite nor cofinite, then by Lemma \ref{lm Blum} there exists a permutation
$\pi\in\mcl G$ with $B=\pi A$ and therefore $\mu(B)=\mu(A)$ by Proposition \ref{prop Ginv}. If
one of the sets $A$, $B$ is finite, then $d(A)=d(B)=0$. If one of them is cofinite, then $d(A)=d(B)=1$, in both
cases $\mu(A)=\mu(B)$.

On the other hand if $\mu(B)=\mu(A)$ holds for every density
measure $\mu$, then, in particular, for every $\mcl F\in\beta\N^*$
we get $\flim \frac{A(n)}n = \flim \frac{B(n)}n$.

This implies that $\flim \frac{A(n)-B(n)}n = 0$ for every $\mcl F\in\beta\N^*$. Thus the only
limit point of the sequence $(\frac{A(n)-B(n)}n)$ is 0 and $\limti n \frac{A(n)-B(n)}n=0$.
\end{proof}

The above proposition yields an alternative proof of Proposition
\ref{prop piinv}: Assume that the equality
$\mu(\pi A)=\mu(A)$ holds
for every density measure $\mu$.
By the above proposition we get $\limti
n \frac{(\pi A)(n)-A(n)}n= 0$ and $\pi\in\mcl G$.

The rest of this section will be devoted to a counterexample
answering some questions concerning density measures posed in
\cite{st} and \cite{vandouwen}. This example can be found in
Bl\"umlinger's paper \cite{blumlinger}. \footnote{Let us note that
the authors originally suggested an example that was much more
complicated. The possibility of using the following example was
pointed out by a referee.}

Let us recall that for any $A\subseteq\N$ the upper asymptotic
density is given by
$\ud(A)=\limsup\limits_{n\to\infty}\frac{A(n)}n$ and the lower
asymptotic density is given by
$\ld(A)=\liminf\limits_{n\to\infty}\frac{A(n)}n$.

\begin{example}\label{exblum}
Let $\FF$ be any free ultrafilter on $\mathbb{N}$.
By $2\FF$ we denote the ultrafilter
given by the base $\{2A; A\in\FF\}$, i.e., $2\FF=\{B\subseteq\N;
B\supseteq 2A$ for some $A\in\FF\}$. Let us define $\mu$ by
$$\mu(A)=2\dflim \frac{A(n)}n - \flim \frac{A(n)}n.$$

This function is shown to be a $\mcl{G}$-invariant measure in
\cite[p.5092--5093]{blumlinger}, hence by Theorem \ref{G
invariance} it is a density measure. For the sake of completeness
we will sketch the proof of this fact.

The estimates $\frac12\frac{A(n)}n \leq \frac{A(2n)}{2n} \leq
\frac12+\frac12\frac{A(n)}n$ imply $$\frac12 \flim \frac{A(n)}n
\leq \dflim \frac{A(n)}n \leq \frac12 + \frac12 \flim
\frac{A(n)}n.$$ From this we obtain $\mu(A)\in \intrv01$.

It is clear that $\mu(A)=d(A)$ whenever $A$ has density. Finite
additivity of $\mu$ follows from the additivity of $\FF$-limit.
Hence $\mu$ is indeed a density measure for any free ultrafilter $\FF$.

Now let us consider the set $A=\bigcup\limits_{i=1}^\infty
\{2^{2^i},2^{2^i}+1,\ldots,2.2^{2^i}-1\}$. Note that
$A(2.2^{2^i}-1)\geq\frac12$ and $A(2^{2^i}-1)\leq \frac1{2^{i-3}}$
for any positive integer $i$. It can be shown that $\ud
(A)=\frac12$ and $\mu(A)=1$ for any free ultrafilter containing the set
$\{2^{2^i}; i\in\N\}$.
\end{example}

Van Douwen asked in \cite[Question 7A.1]{vandouwen} whether
$\mu(A)\leq \ud(A)$ for every density measure. The same question
was asked again in the survey \cite{OPENDOUW}. The measure $\mu$
and the set $A$ from the above example answer this question in
negative.

This also yields a counterexample to the following claim of
Lauwers
\cite[p.46]{lauwers}:\\
{\em Every density measure can be expressed in the form
\begin{gather}\label{lauwers eq}
\mu_\ph(A) = \int_{\beta\N^*}\flim \frac{A(n)}{n}\, \dd\ph(\FF),
\qquad A \subseteq \N
\end{gather}
for some probability Borel measure $\ph$ on the set of all free
ultrafilters $\beta\N^*$.}\\
It is easy to note that if this claim were true the answer to van
Douwen's question would be positive. (The Lauwers' claim was
falsified already by Bl\"umlinger \cite{blumlinger}. In this paper
the set of all measures expressible by \refeq{lauwers eq} is
denoted by $\mcl R$ and it is shown to be a proper subset of the
set of all $\mcl G$-invariant measures.)

Let us note that Lauwers has shown in \cite[Lemma 4]{lauwers} that
a permutation $\pi$ preserves density measures of the form
\refeq{lauwers eq} \iaoi $\pi\in\mcl{G}$. The proof is similar to
our proof of Proposition \ref{prop piinv}. (The permutations
preserving density measures of the form \refeq{lauwers eq} are
called bounded in \cite{lauwers}.)

\v{S}al\'at and Tijdeman have posed another question concerning
the density measures \cite[p.201]{st}. They ask whether every
density measure has the following properties:\\
(a) If $A(n)\leq B(n)$ for all $n\in\N$ then $\mu(A)\leq\mu(B)$ (where $A,B\subseteq\N$).\\
(b) If $\limti n \frac{A(n)}{B(tn)}=1$ then $\mu(A)=t\mu(B)$ (where
$A,B\subseteq\N$ and $t\in\R$).\\
(The authors of \cite{st} conjectured that there exist density
measures that do not fulfill (a) and (b). We will see that this
conjecture was right.)

Clearly, any density measure of the form \refeq{lauwers eq} has
both these properties.

It is easy to verify that for the set $A$ from the preceding
example (and the measure given by an ultrafilter containing
$\{2^{2^i}; i\in\N\}$) we get $\mu(2A)=0$ and $\mu(A)=1$. This
shows that property b) is not valid in general. (A different
density measure $\mu$ and a set $A$ with $\mu(2A) \ne \frac12
\mu(A)$ was given by van Douwen \cite[Example 5.6, Case
2]{vandouwen}.)

The question (a) is closely related to van Douwen's question.
Clearly, if a set $A$ fulfills $\ud(A)<\mu(A)$ there is a set $B$
having asymptotic density $d(B)\in(\ud(A),\mu(A))$. Since
$d(B)>\ud(A)$, there exists $n_0$ such that $B(n)\geq A(n)$ for
$n>n_0$. Since changing only finitely many elements influences
neither asymptotic density nor density measure, any such pair of
sets yields a counterexample to the property (a).

\noindent\textbf{Acknowledgement:} The authors are greatly
indebted to an anonymous referee for suggesting significant
simplifications of several of the proofs presented in the paper
and, first of all, for suggesting Example \ref{exblum} instead of
the (more complicated) examples proposed originally by the
authors.


\end{document}